\long\def\skipit#1{} 
\newcommand{\lra}{\longrightarrow}
\newcommand{\mdef}[1]{\textit{\textbf{#1}}}
\newcommand{\noi}{\noindent}
\newcommand{\ov}{\overline}
\newcommand{\uth}{^{\rm th}}
\newcommand{\vsb}{\vskip-6pt}
\newcounter{hours}
\newcounter{minutes}
\newcommand{\printtime}{
	\setcounter{hours}{\time/60}%
	\setcounter{minutes}{\time-\value{hours}*60}
	\ifthenelse{\value{hours}<10}{0}{}\thehours:%
	\ifthenelse{\value{minutes}<10}{0}{}\theminutes}
\numberwithin{equation}{section}
\numberwithin{figure}{section}
\numberwithin{table}{section}
\newtheorem{thm}{Theorem}[section]
\newtheorem{J-com}{JG-comment}[section]
\theoremstyle{definition}
\newtheorem{example}{Example}[section]
\definecolor{darkgreen}{rgb}{0.2,0.6,0.2}
\begin{document}

\title[Log-Concavity of Genus Polynomials of Ringel Ladders]{{Log-Concavity of the Genus Polynomials of Ringel Ladders}}

\author[J.L. Gross, T. Mansour, T.W. Tucker, and D.G.L. Wang]{Jonathan L. Gross
}
\address{
Department of Computer Science  \\
Columbia University, New York, NY 10027, USA; \newline
email: gross@cs.columbia.edu
}
\author[]{Toufik Mansour
}
\address{
Department of Mathematics  \\
University of Haifa, 31905 Haifa, Israel;  \newline
email: tmansour@univ.haifa.ac.il}
\author[]{Thomas W. Tucker
}
\address{
Department of Mathematics  \\
Colgate University, Hamilton, NY 13346, USA; \newline
email: ttucker@colgate.edu
}
\author[]{David G.L. Wang
}
\address{
School of Mathematics and Statistics  \\
Beijing Institute of Technology, 102488 Beijing, P. R. China;  \newline
email: glw@bit.edu.cn}

\date{}

\begin{abstract}
A \textit{Ringel ladder} can be formed by a \textit{self-bar-amalgamation} operation on a \textit{symmetric ladder}, that is, by joining the root vertices on its end-rungs.  The present authors have previously derived criteria under which \textit{linear chains} of copies of one or more graphs have log-concave genus polynomials.   Herein we establish \textit{Ringel ladders} as the first significant \textit{non-linear} infinite family of graphs known to have log-concave genus polynomials.  We construct an algebraic representation of self-bar-amalgamation as a matrix operation, to be applied to a vector representation of the \textit{partitioned genus distribution} of a symmetric ladder.   Analysis of the resulting genus polynomial involves the use of Chebyshev polynomials. This paper continues our quest to affirm the  quarter-century-old conjecture that all graphs have log-concave genus polynomials.
\end{abstract}
\subjclass[2000] {05A15, 05A20, 05C10}


\maketitle                   

\section{\large{Genus Polynomials}\label{sec:intro}}  
\enlargethispage{-12pt}

Our graphs are implicitly taken to be connected, and our \mdef{graph embeddings} are cellular and orientable.  For general background in topological graph theory, see \cite{GrTu87, BW09B}.  Prior acquaintance with the concepts of \textit{partitioned genus distribution} (abbreviated here as \mdef{pgd}) and \textit{production} (e.g., \cite{GKP10, PKG10}) are necessary preparation for reading this paper.  The exposition here is otherwise intended to be accessible both to graph theorists and to combinatorialists.

The number of combinatorially distinct embeddings of a graph~$G$ in the orientable surface of genus~$i$ is denoted by $g_i(G)$.  The sequence $g_0(G)$, $g_1(G)$, $g_2(G)$, $\ldots$, is called the  \mdef{genus distribution} of  $G$.
A genus distribution contains only finitely many positive numbers, and  there are no zeros between the first and last positive numbers.  The \mdef{genus polynomial} is the polynomial
$$\Gamma_G(x) \,=\, g_0(G) + g_1(G)x +g_2(G)x^2 +\ldots\, .$$

\begin{center}
\medskip
\textsc{Log-concave sequences}  
\end{center}

A sequence $A=(a_k)_{k=0}^n$ is said to be \mdef{nonnegative}, if $a_k\ge0$ for all~$k$.
An element~$a_k$ is said to be an \mdef{internal zero} of~$A$ if  $a_k=0$ and if there exist indices $i$ and $j$ with  $i<k<j$, such that $a_ia_j\ne 0$. If $a_{k-1}a_{k+1} \le a_k^2$ for all $k$, then $A$ is said to be \mdef{log-concave}.  If there exists an index~$h$ with $0\le h\le n$ such that
\[
a_0\,\le\, a_1\,\le\, \cdots\,\le\, a_{h-1}\,\le\, a_h\,\ge\, a_{h+1}\,\ge\,\cdots\,\ge\, a_n,
\]
then $A$ is said to be \mdef{unimodal}.  It is well-known that any nonnegative log-concave sequence without internal zeros is unimodal, and that any nonnegative unimodal sequence has no internal zeros. A prior paper \cite{GMTW13} by the present authors provides additional contextual information regarding log-concavity and genus distributions.

For convenience, we sometimes abbreviate the phrase ``log-concave genus distribution'' as \mdef{LCGD}.  Proofs that closed-end ladders and doubled paths have LCGDs \cite{FGS89} were based on explicit formulas for their genus distributions.  Proof that bouquets have LCGDs \cite{GrRoTu89} was based on a recursion.  A conjecture that all graphs have LCGDs was published by \cite{GrRoTu89}.

Stahl's method \cite{Stah91,Stah97} of representing what we have elsewhere formulated as simultaneous recurrences \cite{FGS89} or as a transposition of a \textit{production system} for a surgical operation on graph embeddings as a matrix of polynomials can simplify a proof that a family of graphs has log-concave genus distributions, without having to derive the genus distribution itself.

Newton's theorem that real-rooted polynomials with non-negative coefficients are log-concave is one way of getting log-concavity.  Stahl \cite{Stah97} made the general conjecture (Conjecture 6.4) that all genus polynomials are real-rooted, and he gave a collection of specific test families.  Shortly thereafter, Wagner \cite{Wag97} proved that the genus distributions for the related closed-end ladders and various other test families suggested by \cite{Stah97} are real-rooted. However, Liu and Wang \cite{LW07} answered Stahl's general conjecture in the negative, by exhibiting a chain of copies of the wheel graph $W_4$, one of Stahl's test families, that is not real-rooted.  Our previous paper \cite{GMTW13} proves, nonetheless, that the genus distribution of every graph in the $W_4$-linear sequence is log-concave.  Thus, even though Stahl's proposed approach to log-concavity via roots of genus polynomials is sometimes infeasible, results in \cite{GMTW13} do support Stahl's expectation that chains of copies of a graph are a relatively accessible aspect of the general LCGD problem.  The genus distributions for the family of Ringel ladders, whose log-concavity is proved in this paper, are not real-rooted either. 

Log-concavity of genus distributions for directed graph embeddings has been studied by \cite{BCMM02} and \cite{CGH14}.  Another related area is the continuing study of maximum genus of graphs, of which \cite{KoSk12} is an example.

\enlargethispage{24pt}

\begin{center}
\medskip
\textsc{Linear, ringlike, and tree-like families}  
\end{center}

Stahl used the term``$H$-linear'' to describe chains of graphs that are constructed by amalgamating copies of a fixed graph $H$.  Such amalgamations are typically on a pair of vertices, one in each of the amalgamands, or on a pair of edges.  It seems reasonable to generalize the usage of \mdef{linear} in several ways, for instance, by allowing graphs in the chain to be selected from a finite set.

We use the term \mdef{ring-like} to describe a graph that results from any of the following topological operations on a doubly rooted linear chain with one root in the first graph of the chain and one in the last graph:
\begin{enumerate}
\item a self-amalgamation of two root-vertices;
\item a self-amalgamation of two root-edges;
\item joining one root-vertex to the other root-vertex (which is called a \mdef{self-bar-amalgamation}).
\end{enumerate}

Every graph can be regarded as \mdef{tree-like} in the sense of tree decompositions.  However, we use this term only when a graph is not linear or ring-like.  For any fixed tree-width $w$ and fixed maximum degree $\Delta$, there is a quadratic-time algorithm \cite{Gr12b} to calculate the genus polynomial of graphs of parameters $w$ and $\Delta$.  One plausible approach to the general LCGD conjecture might be to prove it for fixed tree-width and fixed maximum degree.  Recurrences have been given for the the genus distributions of cubic outerplanar graphs \cite{Gr11b}, 4-regular outerplanar graphs \cite{PKG11}, and cubic Halin graphs \cite{Gr13}, all three of which are tree-like.  However, none of these genus distributions have been proved to be log-concave.   Nor have any other tree-like graphs been proved to have LCGDs.

This paper is organized as follows.   Section 2 describes a representation of partitioning of the genus distribution into ten parts as a \textit{pgd-vector}.   Section 3 describes how \textit{productions} are used to describe the effect of a graph operation on the pgd-vector.  Section 4 analyzes how self-bar amalgamation affects the genus distribution.  Section 5 offers a new derivation of the genus distributions of  the Ringel ladders and proof that these genus distributions are log-concave.

\medskip
\section{\large{Partitioned Genus Distributions}\label{sec:PGD}}  

A fundamental strategy in the calculation of genus distributions, from the outset \cite{FGS89}, has been to partition a genus distribution according to the incidence of face-boundary walks on one or more roots.   We abbreviate ``face-boundary walk'' as \mdef{fb-walk}.  For a graph $(G,u,s)$ with two 2-valent root-vertices, we can partition the number $g_i(G)$ into the following four parts:

\begin{description}
\item [$dd_i(G)$] the number of embeddings of $(G,u,v)$ in the surface $S_i$ such that two distinct fb-walks are incident on root $u$ and two on root $v$;
\item [$ds_i(G)$] the number of embeddings in $S_i$ such that two distinct fb-walks are incident on root $u$ and only one on root $v$;
\item [$sd_i(G)$] the number of embeddings in $S_i$ such that one fb-walk is twice incident on root $u$ and two distinct fb-walks are incident on root $v$;
\item [$ss_i(G)$] the number of embeddings in $S_i$ such that one fb-walk is twice incident on root $u$ and one is twice incident on root $v$.
\end{description}

\noi Clearly, $g_i(G) = dd_i(G) + ds_i(G) + sd_i(G) + ss_i(G)$.  Each of the four parts is sub-partitioned:

\begin{description}
\item [$dd^0_i(G)$] the number of type-$dd$ embeddings  of $(G,u,v)$ in $S_i$ such that neither fb-walk  incident at root $u$ is incident at root $v$;
\item [$dd'_i(G)$] the number of type-$dd$ embeddings in $S_i$ such that one fb-walk  incident at root $u$ is incident at root $v$;
\item [$dd''_i(G)$] the number of type-$dd$ embeddings  in $S_i$ such that both fb-walks  incident at root $u$ are incident at root $v$;
\item [$ds^0_i(G)$] the number of type-$ds$ embeddings in $S_i$ such that neither fb-walk incident at root $u$ is incident at root $v$;
\item [$ds'_i(G)$] the number of type-$ds$ embeddings in $S_i$ such that one fb-walk incident at root $u$ is incident at root $v$;
\item [$sd^0_i(G)$] the number of type-$sd$ embeddings in $S_i$ such that the fb-walk incident at root $u$ is not incident on root $v$;
\item [$sd'_i(G)$] the number of type-$sd$ embeddings in $S_i$ such that the fb-walk at root $u$ is also incident at root $v$;
\item [$ss^0_i(G)$] the number of type-$ss$ embeddings in $S_i$ such that the fb-walk incident at root $u$ is not incident on root $v$;
\item [$ss^1_i(G)$] the number of type-$ss$ embeddings in $S_i$ such that the fb-walk incident at root $u$ is incident at root $v$, and the incident pattern is $uuvv$;
\item [$ss^2_i(G)$] the number of type-$ss$ embeddings in $S_i$ such that the fb-walk incident at root $u$ is incident at root $v$, and the incident pattern is $uvuv$.
\end{description}

\noi We define the \mdef{pgd-vector} of the graph$(G,u,v)$ to be the vector
$$\begin{matrix}
\big(dd''(G) & dd'(G) & dd^0(G) & ds^0(G) & ds'(G) & & & \\[4pt]
& & & sd^0(G) & sd'(G) & ss^0(G) & ss^1(G) & ss^2(G)\big)
\end{matrix}
$$
with ten coordinates, each a polynomial in $x$.  For instance,
$$ds'(G) \;=\; d\tilde{s_0}(G) \,+\,  ds'_1(G)x \,+\,  ds'_2(G) x^2 \,+\,  \cdots .$$

\medskip
\section{\large{Symmetric Ladders}\label{sec:sym-lad}}  

We define the \mdef{symmetric ladder} $(\ddot L_n,u,v)$ to be the graph obtained from the cartesian product $P_2 \Box P_{n+2}$ by contracting the respective edges at both ends that join a pair of 2-valent vertices and designating the remaining two 2-valent vertices at the ends of the ladder  as root-vertices.  The symmetric ladders  $(\ddot L_1,u,v)$  and  $(\ddot L_2,u,v)$ are illustrated in Figure \ref{fig:sym-ladders}.  The location of the roots of a symmetric ladder at opposite ends causes it to have a different partitioned genus distribution from other ladders to which it is isomorphic when the roots are disregarded.

\begin{figure} [ht]
\centering
     \includegraphics[width=3.5in]{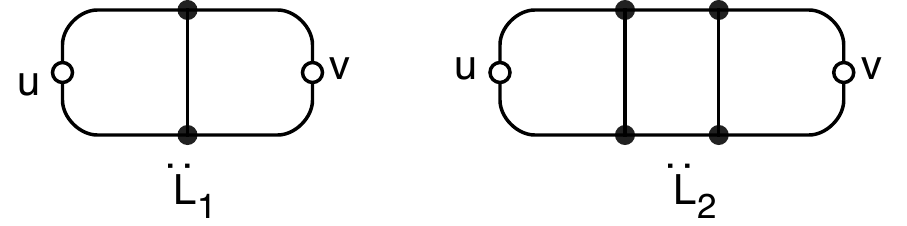} \vsb
\caption{The symmetric ladders $\ddot L_1$ and $\ddot L_2$.}
\label{fig:sym-ladders}
\end{figure}  

\enlargethispage{24pt}
\begin{center}
\medskip
\textsc{Productions}  
\end{center}

A \mdef{production} is an algebraic representation of the set of possible effects of a graph operation on a graph embedding.   For instance, adding a rung to an embedded symmetric ladder  $(\ddot L_n,u,v)$ involves inserting a new vertex on each side of the root-vertex $v$ and then joining the two new vertices.  Since both the resulting new vertices are trivalent, the number of embeddings of  $(\ddot L_{n+1},u,v)$ that can result is 4.  Thus, the sum of the coefficients in the consequent of the production (the right side) is 4.  Figures \ref{fig:ddL-prods1} and~\ref{fig:ddL-prods2} are topological derivations of the following ten productions used to derive the partitioned genus distribution of  $(\ddot L_{n+1},u,v)$ from the partitioned genus distribution of  $(\ddot L_{n},u,v)$.

{\allowdisplaybreaks
\begin{eqnarray*}
\noalign{\vskip-12pt}
dd^0_i &\lra& 2dd^0_i + 2sd^0_{i+1} \\
dd'_i &\lra& dd^0_i + dd'_i + 2sd'_{i+1} \\
dd''_i &\lra& 2dd'_i + 2ss^2_{i+1} \\
ds^0_i &\lra& 2ds^0_i + 2ss^0_{i+1} \\
ds'_i &\lra& ds^0_i + ds'_i + 2ss^1_{i+1} \\
sd^0_i &\lra& 4dd^0_i \\
sd'_i &\lra& 4dd'_i \\
ss^0_i &\lra& 4ds^0_i \\
ss^1_i &\lra& 4ds'_i \\
ss^2_i &\lra& 2ds'_i + 2dd''_i
\end{eqnarray*}
}

\begin{figure} [ht]
\centering
     \includegraphics[width=6in]{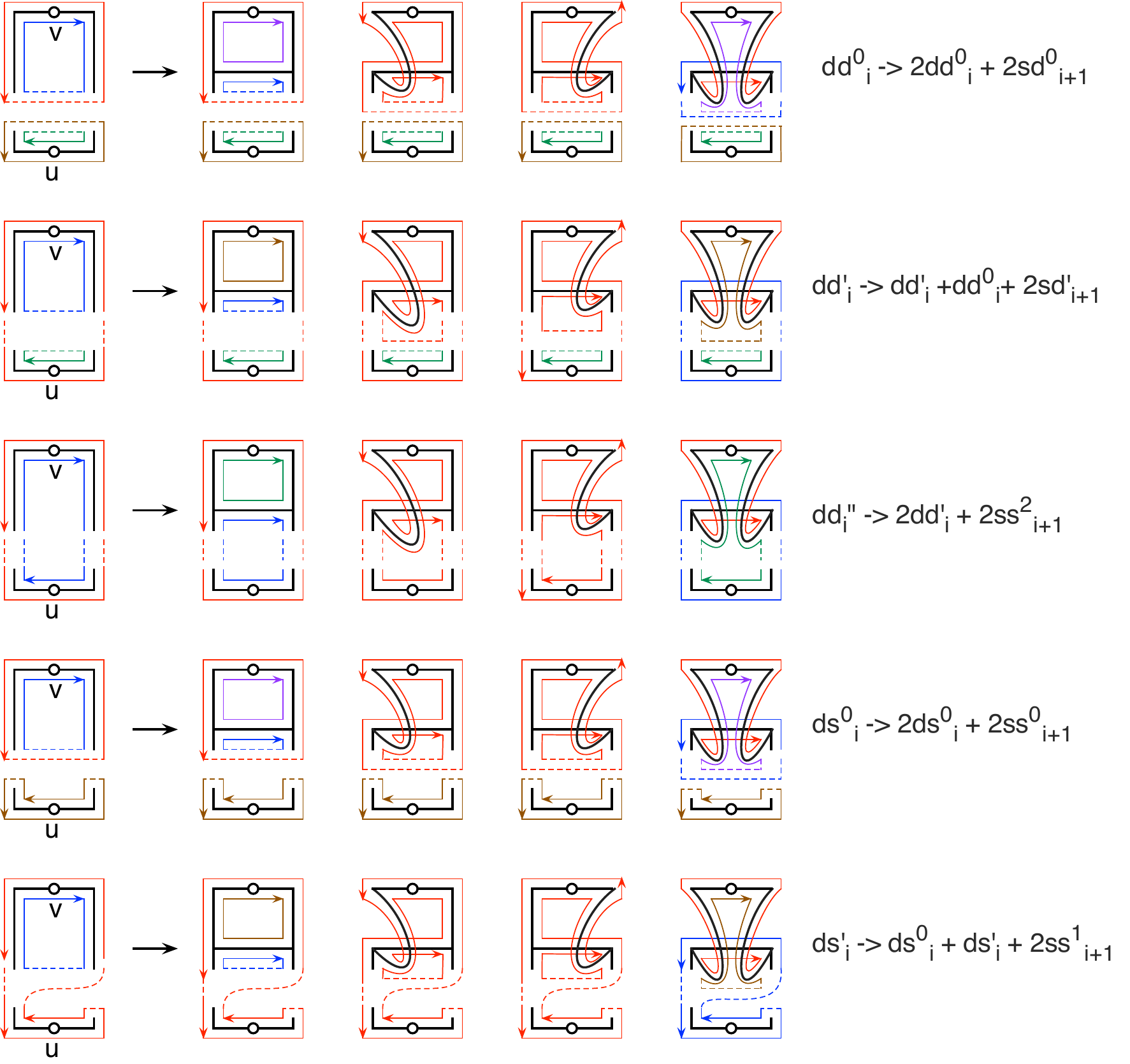}
\caption{\hbox{Five productions for construction of symmetric ladders.}}
\label{fig:ddL-prods1}
\end{figure}  
\clearpage

\begin{figure} [ht]
\centering
     \includegraphics[width=5.7in]{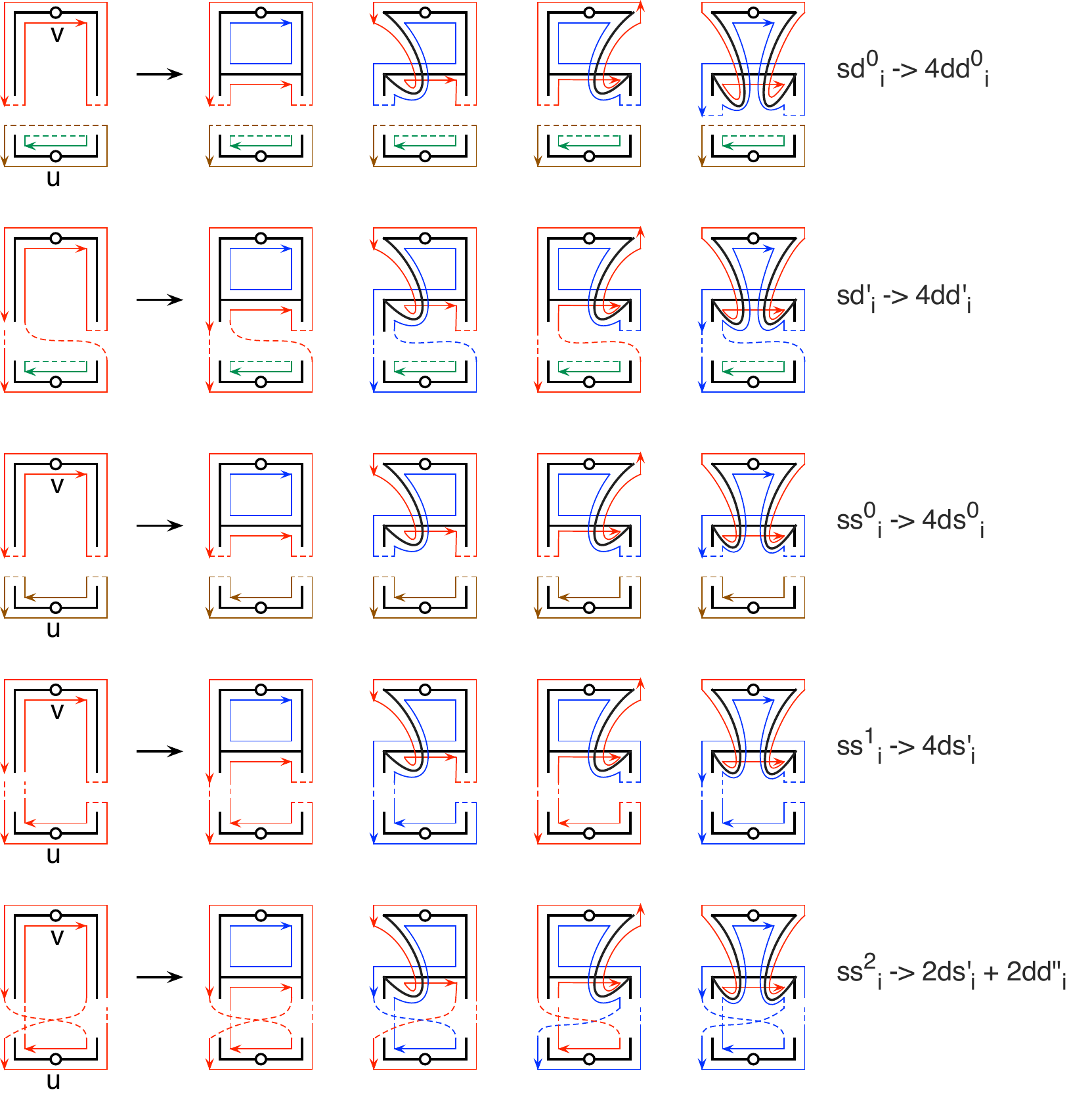}  \vsb
\caption{Five more productions for symmetric ladders.}
\label{fig:ddL-prods2}
\end{figure}  

\newpage

\begin{thm}
The pgd-vector of the symmetric ladder $(\ddot L_0,u,v)$ is
\begin{equation} \label{vec:dd0-unit}
V_{L_0} \,=\, \begin{pmatrix} 0 & 0 & 1 & 0 & 0 & 0 & 0 & 0 & 0 & 0 \end{pmatrix}.
\end{equation}
For $n>0$, the pgd-vector of the symmetric ladder $(\ddot L_n,u,v)$ is the product of the row-vector $V_{L_{n-1}}$ with the $10\times10$ production matrix
\begin{equation} \label{mat:sym}
{\bf M} \;=\; \begin{pmatrix}
2 & 0 & 0 & 0 & 0 & 2x & 0 & 0 & 0 & 0 \\
1 & 1 & 0 & 0 & 0 & 0 & 2x & 0 & 0 & 0 \\
0 & 2 & 0 & 0 & 0 & 0 & 0 & 0 & 0 & 2x \\
0 & 0 & 0 & 2 & 0 & 0 & 0 & 2x & 0 & 0 \\
0 & 0 & 0 & 1 & 1 & 0 & 0 & 0 & 2x & 0 \\
4& 0 & 0 & 0 & 0 & 0 & 0 & 0 & 0 & 0 \\
0 & 4 & 0 & 0 & 0 & 0 & 0 & 0 & 0 & 0 \\
0 & 0 & 0 & 4 & 0 & 0 & 0 & 0 & 0 & 0 \\
0 & 0 & 0 & 0 & 4 & 0 & 0 & 0 & 0 & 0 \\
0 & 0 & 2 & 0 & 2 & 0 & 0 & 0 & 0 & 0
\end{pmatrix}
\end{equation}
\end{thm}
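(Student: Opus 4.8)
The plan is a short induction on $n$ in which the ten productions displayed above do all the combinatorial work and the matrix $\mathbf M$ is merely their bookkeeping form.

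For the base case I would first pin down the graph $(\ddot L_0,u,v)$: taking $n=0$ in the construction, $P_2\Box P_2$ is the $4$-cycle, and contracting its two end-rungs identifies each end-rung to a single $2$-valent vertex, leaving exactly two such vertices joined by two parallel edges --- the dipole $D_2$, with those two vertices as $u$ and $v$. This graph has a single cellular embedding, in the sphere; its two faces are bigons, each of whose boundary walk is incident once on $u$ and once on $v$, and both of the two fb-walks at $u$ are incident at $v$. Reading off the ten counts from this description gives all coordinates equal to $0$ except the one recording ``both fb-walks at $u$ incident at $v$, genus $0$,'' which equals $1$; that is the vector $V_{L_0}$ of~\eqref{vec:dd0-unit}.

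For the inductive step, recall that $\ddot L_n$ is obtained from $\ddot L_{n-1}$ by the rung-addition operation: subdivide each of the two edges at $v$ with a new (trivalent) vertex and join the two new vertices by a rung, keeping $v$ as a root. As noted before the theorem, each embedding of $\ddot L_{n-1}$ yields exactly $4$ embeddings of $\ddot L_n$ under this operation, because the rotation at each of the two new trivalent vertices is free; and every embedding of $\ddot L_n$ arises this way from a unique embedding of $\ddot L_{n-1}$, recovered by deleting the new rung and suppressing the two subdivision vertices. The ten productions record, for each of the ten fb-walk types of a genus-$i$ embedding of $\ddot L_{n-1}$, which types and which genera (either $i$ or $i+1$) the four resulting embeddings of $\ddot L_n$ have. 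Summing a production over all embeddings of its antecedent type converts it into an identity between the corresponding coordinate polynomials of the pgd-vectors of $\ddot L_{n-1}$ and $\ddot L_n$, with a factor of $x$ on each term whose genus index was raised; assembling the ten identities so that the antecedent type indexes the row and the consequent types index the columns produces precisely the matrix $\mathbf M$ of~\eqref{mat:sym}. Hence, if $V_{L_{n-1}}$ is the pgd-vector of $\ddot L_{n-1}$, then $V_{L_{n-1}}\mathbf M$ is the pgd-vector of $\ddot L_n$, and the induction closes.

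Granting the productions (whose topological derivations are in Figures~\ref{fig:ddL-prods1} and~\ref{fig:ddL-prods2}), the only point requiring care in the proof itself is the transcription: matching each production's antecedent and consequent to the correct row and columns of $\mathbf M$ under the coordinate ordering in use, and attaching the factor $x$ to exactly the genus-raising outcomes; a convenient sanity check is that every row of $\mathbf M$ evaluates to $4$ at $x=1$, as it must. The genuinely substantive obstacle lies one layer down, in verifying the ten productions themselves --- that is, in the case-by-case topological analysis of how inserting the new rung at $v$ splits or fuses the faces in a neighborhood of $v$, and hence how the incidence pattern of face-boundary walks on the new root $v$, relative to $u$, is reclassified among the ten types; this analysis, together with the resulting genus change, is exactly what Figures~\ref{fig:ddL-prods1} and~\ref{fig:ddL-prods2} carry out.
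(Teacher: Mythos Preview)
Your proposal is correct and follows the same approach as the paper's proof, which is essentially a one-line observation that each row of $\mathbf M$ transcribes one of the ten productions (the paper does not even verify the base case). Your treatment is in fact more complete: you explicitly identify $\ddot L_0$ as the dipole $D_2$ and check that its unique embedding has type $dd''$ (the third coordinate under the matrix's row/column ordering $dd^0,\,dd',\,dd'',\,\ldots$), and you spell out the $4$-to-$1$ correspondence and the genus-shift bookkeeping that the paper leaves implicit.
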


\begin{proof}
Each of the ten rows of the matrix $M$ represents one of the ten productions.  For instance, the first two rows represent the productions
\begin{eqnarray*}
dd^0_i &\lra& 2dd^0_i + 2sd^0_{i+1} \\
dd'_i &\lra& dd^0_i + dd'_i + 2sd'_{i+1} \qedhere
\end{eqnarray*}
\end{proof}

\begin{example}  \label{ex:sym-lad}
We iteratively calculate pgd-vectors of the symmetric ladders $L_n$ for $n\le4$
$$\setcounter{MaxMatrixCols}{14}
\begin{matrix}
V_{L_0} &=&( 0 &   0    &   1 &   0 &   0 &   0 &   0 &   0 &   0 &   0 ) \\
V_{L_1} &=&( 0  &   2     &   0 &   0 &   0 &   0 &   0 &   0 &   0 &   2x ) \\
V_{L_2} &=&( 2 &   2     &   4x & 0 &   4x &   0 & 4x &   0 &   0 &   0) \\
V_{L_3} &=&( 6 &2+24x&   0 &   4x &   4x &   4x &   4x &   0 &  8x^2 & 8x^2) \\
V_{L_4} &=&( 14+40x & 2+40x  & 16x^2 & 12x & 4x+48x^2 & 12x & 4x+48x^2&   8x^2 &  8x^2 &   0 )
\end{matrix}$$
\end{example}

\bigskip
\section{\large{Self-Bar-Amalgamations}\label{sec:SBA}}  

We recall from Section \ref{sec:intro} that the self-bar-amalgamation of any doubly vertex-rooted graph $(G,u,v)$, which is denoted $\ov{*}_{uv}(G,u,v)$, is formed by joining the roots $u$ and $v$.  The present case of interest is when the two roots are 2-valent and non-adjacent.  We observe that if $G$ is a cubic 2-connected graph and if each of the two roots is created by placing a new vertex in the interior of an edge of $G$, then the result of the self-bar amalgamation is again a 2-connected cubic graph.
\smallskip

\begin{thm} \label{thm:SBA}
Let $(G,u,v)$ be a graph with two non-adjacent 2-valent vertex roots.  The (non-partitioned) genus distribution of the graph  $\ov{*}_{uv}(G,u,v)$, obtained by self-bar-amalgamation, can be calculated as the dot-product of the pgd-vector $V_G$ with the following row-vector:
\begin{equation} \label{vec:SBA}
B \;=\; 
\begin{pmatrix}
4x & 1+3x & 2+2x & 4x & 2+2x & 4x & 2+2x & 4x & 4 & 4
\end{pmatrix}.
\end{equation}
\end{thm}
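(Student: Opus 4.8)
The plan is to reduce the statement to a case analysis of how the single operation ``add a new edge joining $u$ and $v$'' acts on one embedding of $(G,u,v)$, organized by the ten incidence types that index the coordinates of $V_G$. Because $u$ and $v$ are $2$-valent (and, as assumed, non-adjacent), attaching the new edge $uv$ makes each root $3$-valent, and the new dart-end can be inserted into either of the two face-corners at $u$ and either of the two at $v$; so every embedding $\epsilon$ of $(G,u,v)$ has exactly $2\cdot2=4$ extensions to an embedding of $\ov{*}_{uv}(G,u,v)$, and conversely every embedding of the amalgamation restricts (by deleting the new edge) to a unique embedding of $(G,u,v)$. This already explains why each coordinate of $B$ has coefficient-sum~$4$, and it reduces the proof to computing, for each extension, the change in genus.

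Next I would establish the dichotomy governing that change. Attaching $uv$ fixes the vertex count and raises the edge count by one, so by Euler's formula the face count changes by an odd number; since the one new edge is traversed exactly twice in the face-tracing algorithm, that change is precisely $\pm1$, and the genus either stays the same or rises by exactly~$1$. Tracing the face-boundary walks through the new darts then yields the clean rule: if the corner chosen at $u$ and the corner chosen at $v$ lie on the \emph{same} fb-walk of $\epsilon$, the new edge splits that face in two ($\Delta F=+1$, genus unchanged); if they lie on \emph{different} fb-walks, it fuses those two faces into one ($\Delta F=-1$, genus $+1$). Hence, if $\epsilon$ has incidence type $\tau$ and genus $i$, and if $s_\tau\in\{0,1,2,3,4\}$ of its four corner-pairs lie on a common fb-walk, the four extensions of $\epsilon$ contribute $s_\tau x^i+(4-s_\tau)x^{i+1}$ to the genus polynomial of the amalgamation. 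Summing over all $\epsilon$ gives $\Gamma_{\ov{*}_{uv}(G,u,v)}(x)=\sum_\tau\bigl(s_\tau+(4-s_\tau)x\bigr)\,\tau(G)$, so the theorem is equivalent to the assertion that the ten polynomials $s_\tau+(4-s_\tau)x$ are the entries of $B$.

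It remains to evaluate $s_\tau$ for the ten types directly from the definitions in Section~\ref{sec:PGD}. For a type-$dd$ embedding the two corners at $u$ lie on distinct fb-walks $a,b$ and the two at $v$ on distinct fb-walks $c,d$, and the sub-types record $|\{a,b\}\cap\{c,d\}|$; exactly that many of the four pairs $(a,c),(a,d),(b,c),(b,d)$ coincide, so $s_{dd^0}=0$, $s_{dd'}=1$, $s_{dd''}=2$, giving entries $4x$, $1+3x$, $2+2x$. For a type-$ds$ or type-$sd$ embedding one root carries a single fb-walk at both of its corners, so a corner-pair coincides exactly when the \emph{other} root's chosen corner lies on that walk; the sub-types $ds^0,sd^0$ forbid this ($s=0$, entry $4x$), while $ds',sd'$ force it at both corners of the doubled root ($s=2$, entry $2+2x$). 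For a type-$ss$ embedding both roots carry a single fb-walk at both corners: if these two walks differ ($ss^0$) no pair coincides ($s=0$, entry $4x$), and if they coincide ($ss^1$ and $ss^2$) all four pairs lie on that one walk ($s=4$, entry $4$) --- so the $uuvv$ versus $uvuv$ refinement plays no role here. Listing these ten entries in the coordinate order of $V_G$ reproduces the vector $B$ of~\eqref{vec:SBA}, completing the proof.

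The step I expect to be the crux is the second one: proving the split-versus-fuse dichotomy at the level of the face-tracing algorithm rather than by an informal ``draw a chord'' picture, and in particular checking that it depends only on whether the two chosen corners share an fb-walk --- never on finer data such as how many times that walk revisits a root. Once this lemma is isolated (ideally with a figure exhibiting, for each of the ten types, the four ways of attaching the bar edge), the case analysis above is entirely mechanical.
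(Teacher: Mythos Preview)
Your proposal is correct and follows essentially the same route as the paper: a case analysis over the ten incidence types, producing exactly the ten productions the paper lists. The paper's own proof simply invokes two figures that derive these productions pictorially and then writes them down; you have instead isolated the underlying mechanism (the same-face/different-face dichotomy for the two chosen corners, giving $\Delta F=\pm1$) and reduced each case to counting how many of the four corner-pairs share a face. That is a cleaner and more self-contained presentation of the same argument, and your remark that the $uuvv$/$uvuv$ refinement is irrelevant here is exactly right.
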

\begin{proof} \vsb
Figures \ref{fig:bar-amalg-D} and \ref{fig:bar-amalg-S} derive the ten corresponding productions.

\enlargethispage{36pt}

{\allowdisplaybreaks
\begin{align*}
\noalign{\vskip-12pt}
dd^0_i &\lra 4g_{i+1} \\
dd'_i &\lra g_i +3g_{i+1} \\
dd'' &\lra 2g_i + 2g_{i+1} \\
ds^0_i &\lra 4g_{i+1} \\
ds'_i &\lra 2g_i + 2g_{i+1} \\
sd^0_i &\lra 4g_{i+1} \\
sd'_i &\lra 2g_i + 2g_{i+1} \\
ss^0_i &\lra 4g_{i+1} \\
ss^1_i &\lra 4g_{i} \\
ss^2_i &\lra 4g_{i}  \qedhere
\end{align*}
} 
\end{proof}
\medskip

\begin{figure} [ht]
\centering
     \includegraphics[width=5.5in]{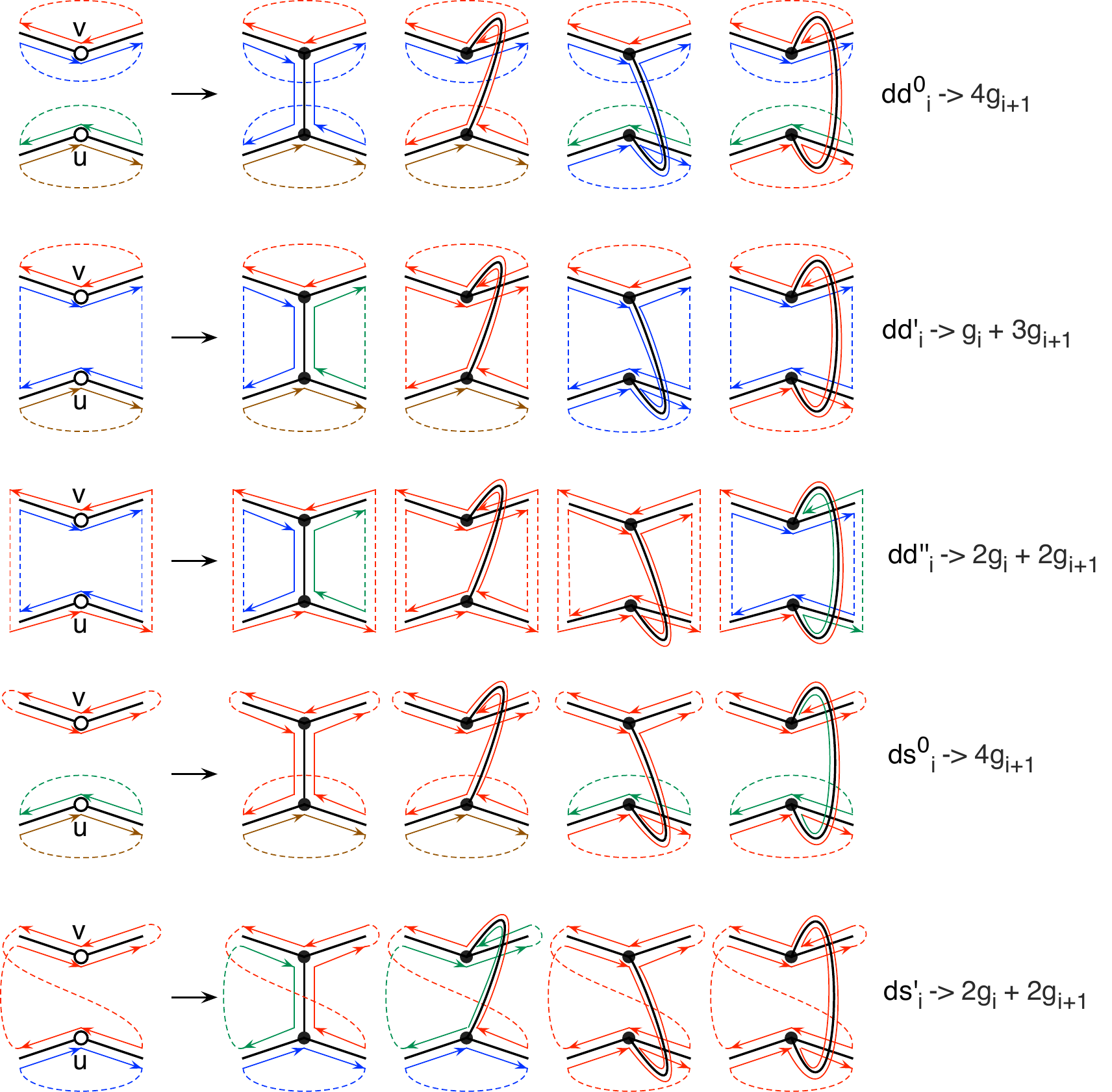}
\caption{Five productions for self-bar-amalgamation.}
\label{fig:bar-amalg-D}
\end{figure}  

\clearpage

\begin{figure} [ht]
\centering
     \includegraphics[width=5.5in]{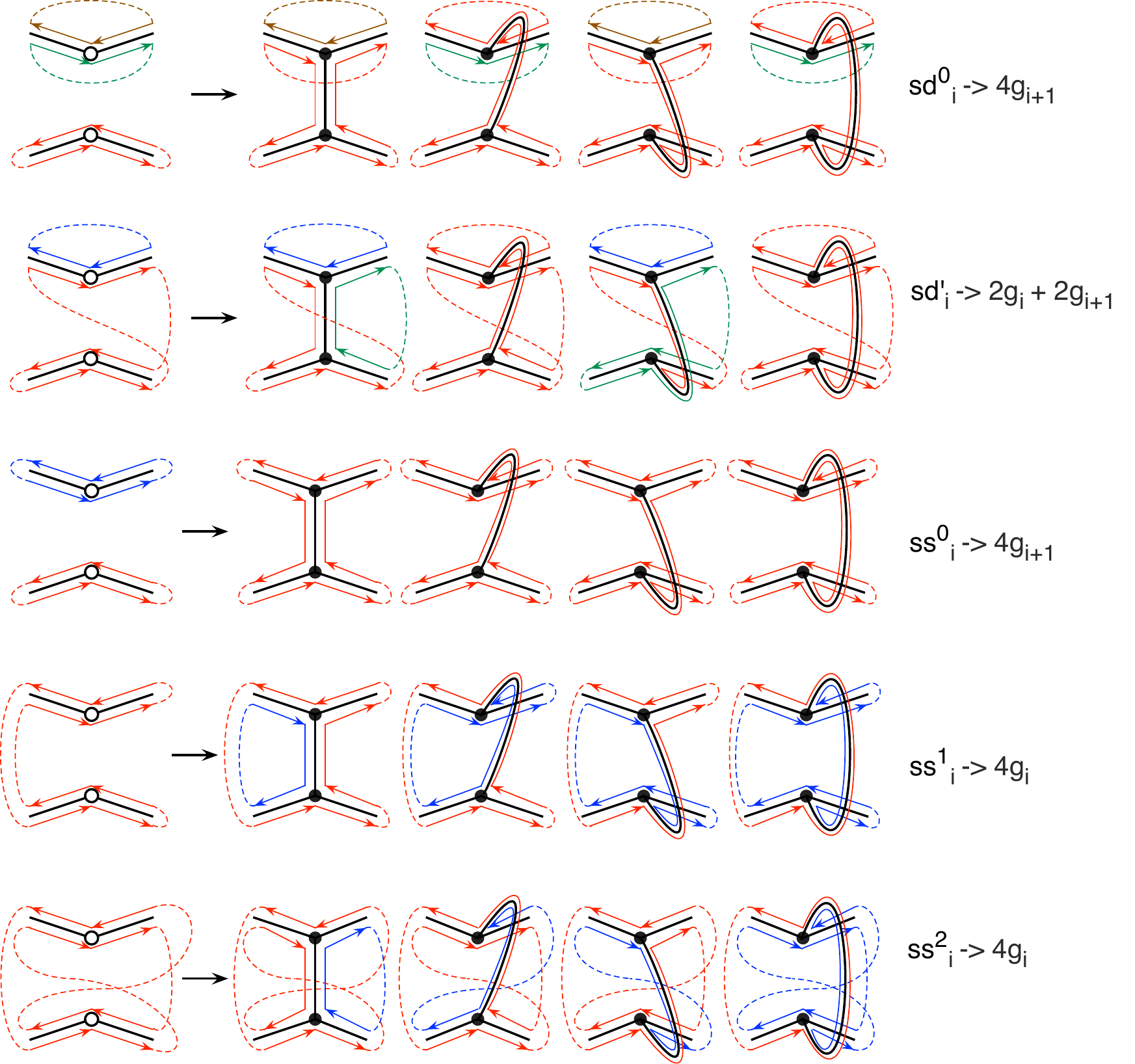}
\caption{Five more productions for self-bar-amalgamation.}
\label{fig:bar-amalg-S}
\end{figure}  


\enlargethispage{36pt}

\smallskip
\section{\large{Ringel Ladders}\label{sec:Ringel}}  

We define a \mdef{Ringel ladder} $RL_{n}$ to be the result of a self-bar-amalgamation on the symmetric ladder $(\ddot L_n, u,v)$.  Such ladders were introduced by Gustin \cite{Gu63} and used extensively by Ringel \cite{Ri74} in his solution with Youngs \cite{RiYo68} of the Heawood map-coloring problem. The Ringel ladder $RL_4$ is illustrated in Figure \ref{fig:Ringel}.  The genus distributions of Ringel ladders were first calculated by Tesar \cite{Te00}.  Our rederivation here is to facilitate our proof of their
log-concavity.

\begin{figure} [ht]
\centering
     \includegraphics[width=2.5in]{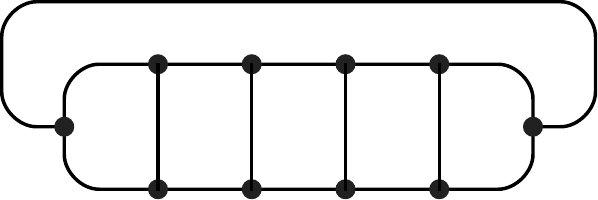}
\caption{The Ringel ladder $RL_4$.}
\label{fig:Ringel}
\end{figure}  

\clearpage

\begin{example}
\noi We take dot products of the pgd-vectors calculated in Example \ref{ex:sym-lad}
$$\setcounter{MaxMatrixCols}{14}
\begin{matrix}
V_{L_0} &=&( 0 &   0    &   1 &   0 &   0 &   0 &   0 &   0 &   0 &   0 ) \\
V_{L_1} &=&( 0  &   2     &   0 &   0 &   0 &   0 &   0 &   0 &   0 &   2x ) \\
V_{L_2} &=&( 2 &   2     &   4x & 0 &   4x &   0 & 4x &   0 &   0 &   0) \\
V_{L_3} &=&( 6 &2+24x&   0 &   4x &   4x &   4x &   4x &   0 &  8x^2 & 8x^2) \\
V_{L_4} &=&( 14+40x & 2+40x  & 16x^2 & 12x & 4x+48x^2 & 12x & 4x+48x^2&   8x^2 &  8x^2 &   0 )
\end{matrix}$$
with the vector \eqref{vec:SBA}
\begin{equation*}
B \;=\;
\begin{pmatrix}
4x & 1+3x & 2+2x & 4x & 2+2x & 4x & 2+2x & 4x & 4 & 4
\end{pmatrix}
\end{equation*}
to obtain the genus polynomials of the corresponding Ringel ladders.
\begin{align*}
\Gamma_{RL_0}(x) &\;=\, 2+2x \\
\Gamma_{RL_1}(x) &\;=\, 2+14x \\
\Gamma_{RL_2}(x) &\;=\, 2+38x+24x^2 \\
\Gamma_{RL_3}(x) &\;=\, 2+70x+184x^2 \\
\Gamma_{RL_4}(x) &\;=\, 2+118x+648x^2 + 256x^3
\end{align*}
\end{example}
\smallskip

\begin{thm}\label{thGRL}
The genus distribution of the Ringel ladder $RL_n$  is given by taking the dot product of the vector $B$ with the product of the vector $V_{L_0}$  and the matrix $M^n$, where $B$ is given by~(\ref{vec:SBA}), and $M$ is given by \eqref{mat:sym}.
\end{thm}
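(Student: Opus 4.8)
The plan is to derive this statement directly from the two structural results already established, with essentially no new computation. The first ingredient is the theorem of Section~\ref{sec:sym-lad}, which states that $V_{L_0}$ is the unit vector \eqref{vec:dd0-unit} and that $V_{L_n}=V_{L_{n-1}}{\bf M}$ for $n>0$. The second ingredient is Theorem~\ref{thm:SBA}, which says that for a graph $(G,u,v)$ with two non-adjacent $2$-valent roots, the genus distribution of $\ov{*}_{uv}(G,u,v)$ is the dot product $V_G\cdot B$. Since $RL_n$ is defined to be $\ov{*}_{uv}(\ddot L_n,u,v)$, the theorem will follow as soon as we have a closed form for the pgd-vector of $\ddot L_n$ and a check that Theorem~\ref{thm:SBA} applies.

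First I would prove, by induction on $n$, that the pgd-vector of $(\ddot L_n,u,v)$ is $V_{L_n}=V_{L_0}{\bf M}^{n}$. The base case $n=0$ is immediate, since ${\bf M}^0$ is the identity. For the inductive step, the Section~\ref{sec:sym-lad} theorem gives $V_{L_n}=V_{L_{n-1}}{\bf M}$, and the inductive hypothesis gives $V_{L_{n-1}}=V_{L_0}{\bf M}^{n-1}$, so $V_{L_n}=V_{L_0}{\bf M}^{n-1}{\bf M}=V_{L_0}{\bf M}^{n}$ by associativity of matrix multiplication. Then, applying Theorem~\ref{thm:SBA} with $(G,u,v)=(\ddot L_n,u,v)$, we obtain $\Gamma_{RL_n}(x)=V_{L_n}\cdot B=\bigl(V_{L_0}{\bf M}^{n}\bigr)\cdot B$, which is exactly the claimed formula.

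The one step that requires care --- and hence the only real obstacle, mild as it is --- is verifying the hypothesis of Theorem~\ref{thm:SBA} that the roots $u,v$ of $\ddot L_n$ are $2$-valent and non-adjacent. For every $n\ge1$ this is clear from the construction of $\ddot L_n$ from $P_2\Box P_{n+2}$ by contracting the two end-rungs: the two surviving $2$-valent vertices lie at opposite ends of a ladder with at least one interior rung, hence are not joined by an edge. The case $n=0$ is genuinely degenerate, since $\ddot L_0$ is the two-vertex dipole whose two (adjacent) vertices are the roots, so Theorem~\ref{thm:SBA} does not formally apply; but here one simply notes that $\bigl(V_{L_0}{\bf M}^0\bigr)\cdot B$ equals the third coordinate $2+2x$ of $B$, which is $\Gamma_{RL_0}(x)$, so the formula still holds by inspection. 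Everything else amounts to routine matrix bookkeeping, illustrated for $n\le4$ in the Example preceding the statement.
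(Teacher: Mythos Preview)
Your proof is correct and follows the same approach as the paper, which simply says ``This follows immediately from Theorem~\ref{thm:SBA}.''  Your version is just a more careful unpacking of that one-liner --- making the induction $V_{L_n}=V_{L_0}{\bf M}^n$ explicit and flagging the degenerate case $n=0$ where the roots of $\ddot L_0$ are adjacent --- neither of which the paper spells out.
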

\begin{proof}\vsb
This follows immediately from Theorem \ref{thm:SBA}.
\end{proof}

To deduce an explicit expression of $\Gamma_{RL_n}(x)$, we shall use Chebyshev polynomials.
\mdef{Chebyshev polynomials of the second kind} are defined by
the recurrence relation
$$U_p(x) \;=\; 2xU_{p-1}(x)-U_{p-2}(x),$$
with $U_0(x)=1$ and $U_1(x)=2x$.
It can be equivalently defined by the generating function
\begin{equation}\label{def:U}
\sum_{p\ge0}U_p(x){t^p}={1\over 1-2xt+t^2}.
\end{equation}•
The $p\uth$ Chebyshev polynomial $U_p(x)$ can be expressed  by $$U_p(x)=\sum_{j\geq0}(-1)^j\binom{p-j}{j}(2x)^{p-2j}.$$

\begin{thm}\label{thRL1}
The genus distribution of the Ringel ladder $RL_n$ is given by
\begin{align*}
\Gamma_{RL_n}(x)&\;=\; (1-x)\sum_{j\geq0}\left(\binom{n-j}{j}+\binom{n-j+1}{j}\right)(8x)^j\\
&\qquad+x2^{n+1}\sum_{j\geq0}\left(\binom{n-j}{j}+\binom{n-j+1}{j}\right)(2x)^j.
\end{align*}
\end{thm}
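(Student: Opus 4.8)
The plan is to compute the generating function $G(t,x)=\sum_{n\ge 0}\Gamma_{RL_n}(x)\,t^n$ in closed form and then read off the coefficient of $t^n$. By Theorem~\ref{thGRL}, $\Gamma_{RL_n}(x)=V_{L_0}M^nB^{\top}$, so $G(t,x)=V_{L_0}(I-tM)^{-1}B^{\top}$, a rational function of $t$ whose denominator divides $\det(I-tM)$.

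To evaluate $G$ I would exploit a block structure of $M$. Split the ten pgd-coordinates into the five ``$d$-type at $u$'' coordinates $dd^0,dd',dd'',ds^0,ds'$ and the five ``$s$-type at $u$'' coordinates $sd^0,sd',ss^0,ss^1,ss^2$. Inspecting the ten productions that define $M$ in~\eqref{mat:sym}, one sees that adding a rung to an $s$-type embedding always yields a $d$-type embedding with no change of genus, whereas adding a rung to a $d$-type embedding yields a $d$-type embedding (no genus change) together with an $s$-type embedding (with genus raised by one). Hence $M=\bigl(\begin{smallmatrix}A&xC\\ D&0\end{smallmatrix}\bigr)$ in $5\times5$ blocks, with $A,C,D$ having nonnegative integer entries. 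Since $V_{L_0}=(e_3\mid 0)$ with $e_3=(0,0,1,0,0)$ by~\eqref{vec:dd0-unit}, and writing $B=(B_1\mid B_2)$ for the analogous splitting of the row vector~\eqref{vec:SBA}, eliminating the lower block by a Schur-complement step gives
\[
G(t,x)\;=\;e_3\,\bigl(I-tA-t^2x\,CD\bigr)^{-1}\bigl(B_1^{\top}+tx\,CB_2^{\top}\bigr).
\]
The matrix $N:=I-tA-t^2x\,CD$, the product $CD$, and the column $CB_2^{\top}$ are now small, explicit objects; moreover $N$ is lower triangular apart from a single off-diagonal entry, so the linear system $yN=e_3$ is solved by back-substitution. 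The result is $G(t,x)$ as a rational function with denominator $(1-4xt^2)(1-t-8xt^2)(1-2t-8xt^2)$.

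The one step needing care is to notice that the factor $1-4xt^2$, which arose only as the diagonal entry of $N$ at the ``transient'' coordinate $dd''$, divides the numerator and cancels; this is a short check (substitute $4xt^2=1$ into the numerator, see that it vanishes, then divide). What survives is
\[
G(t,x)\;=\;(1-x)\,\frac{2+8xt}{1-t-8xt^2}\;+\;x\,\frac{4+8xt}{1-2t-8xt^2},
\]
which can be cross-checked against $\Gamma_{RL_0},\dots,\Gamma_{RL_4}$. It remains to extract the coefficient of $t^n$. The coefficient of $t^n$ in $1/(1-t-yt^2)$ is $\sum_{j\ge 0}\binom{n-j}{j}y^j$, as one sees either from the recurrence $f_n=f_{n-1}+yf_{n-2}$ encoded by $1/(1-t-yt^2)$ or from the Chebyshev identity $U_p(x)=\sum_j(-1)^j\binom{p-j}{j}(2x)^{p-2j}$ after the substitution carrying $1-t-yt^2$ to a scalar multiple of the Chebyshev denominator $1-2\xi t+t^2$ in~\eqref{def:U}; consequently
\[
\frac{2+yt}{1-t-yt^2}\;=\;\sum_{n\ge 0}\Bigl(\sum_{j\ge 0}\Bigl[\binom{n-j}{j}+\binom{n+1-j}{j}\Bigr]y^j\Bigr)t^n.
\]
Applying this with $y=8x$ to the first summand of $G$, and with $y=2x$ together with $t\mapsto 2t$ to the second (which is where the factor $2^{n+1}$ comes from), yields exactly the claimed formula for $\Gamma_{RL_n}(x)$. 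The main obstacle is the explicit evaluation of $G(t,x)$: the linear algebra is elementary, but the polynomial bookkeeping must be carried out carefully, and one has to spot the cancellation of $1-4xt^2$.
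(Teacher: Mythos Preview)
Your proposal is correct and follows essentially the same route as the paper: compute the generating function $G(t,x)=V_{L_0}(I-tM)^{-1}B^{\top}$, obtain it as $(1-x)\frac{2+8xt}{1-t-8xt^2}+x\frac{4+8xt}{1-2t-8xt^2}$, and extract the coefficient of $t^n$ via the binomial/Chebyshev expansion of $1/(1-t-yt^2)$ followed by Pascal's identity. The only difference is tactical: the paper invokes Maple to evaluate $V_{L_0}(I-tM)^{-1}$ directly and phrases the extraction step through $U_p$, whereas you exploit the $2\times2$ block structure of $M$ and the Schur complement to solve the resulting near-triangular $5\times5$ system by hand, which is a nice self-contained replacement for the computer-algebra step.
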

\begin{proof}
Using Theorem \ref{thGRL} and mathematical software such as Maple, we calculate the generating function
\begin{align*}
\sum_{n\geq0}V_{L_0}M^nt^n=(a,b,c,2xta,2xtb,2xta,2xtb,4x^2t^2a,4x^2t^2b,2xtc),
\end{align*}
where 
\begin{align*}
a&=\frac{2t^2}{(1-2t-8xt^2)(1-t-8xt^2)(1-4xt^2)},\\ 
b&=\frac{2t}{(1-t-8xt^2)(1-4xt^2)},\\
c&=\frac{1}{1-4xt^2}.
\end{align*}
This implies 
\begin{align*}
\sum_{n\geq0}\Gamma_{RL_n}(x)t^n&=\sum_{n\geq0}V_{L_0}M^nB^Tt^n\\
&=V_{L_0}(1-tM)^{-1}B^T\\[4pt]
&=\frac{2(1-x)(1+4xt)}{1-t-8xt^2}+\frac{4x(1+2xt)}{1-2t-8xt^2}.
\end{align*}
From Definition~(\ref{def:U}), we can denote the coefficient $\Gamma_{RL_n}(x)$ of $t^n$
in the above generating function in the following form.
\begin{align*}
\Gamma_{RL_n}(x)&=(1-x)\sqrt{-8x}^{n+1}\biggl(\frac{2}{\sqrt{-8x}}U_n\left(\frac1{2\sqrt{-8x}}\right)-U_{n-1}\left(\frac1{2\sqrt{-8x}}\right)\biggr)\\
&\qquad+x\sqrt{-8x}^{n+1}\biggl(\frac{4}{\sqrt{-8x}}U_n\left(\frac1{\sqrt{-8x}}\right)-U_{n-1}\left(\frac1{\sqrt{-8x}}\right)\biggr)\\
&=(1-x)\sum_{j\geq0}\left(2\binom{n-j}{j}+\binom{n-j}{j-1}\right)(8x)^j\\
&\qquad+x\sum_{j\geq0}\left(2\binom{n-j}{j}+\binom{n-j}{j-1}\right)2^{n+1+j}x^j.
\end{align*}
Using the Pascal recursion ${n\choose k}+{n\choose k-1}={n+1\choose k}$,
we get the desired expression.
\end{proof}
\medskip

\begin{thm} \label{thm:RL-LC}
The Ringel ladders $RL_n$ have log-concave genus distributions.
\end{thm}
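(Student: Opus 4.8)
The plan is to argue directly from the closed form of $\Gamma_{RL_n}(x)$ in Theorem~\ref{thRL1}. Abbreviate $c_{n,j}:=\binom{n-j}{j}+\binom{n-j+1}{j}$ and $S_n(y):=\sum_{j\ge 0}c_{n,j}y^{j}$, so that Theorem~\ref{thRL1} reads $\Gamma_{RL_n}(x)=(1-x)S_n(8x)+2^{\,n+1}x\,S_n(2x)$. Extracting the coefficient of $x^{i}$ gives at once
\[
 g_i(RL_n)=8^{\,i}c_{n,i}+\bigl(2^{\,n+i}-8^{\,i-1}\bigr)c_{n,i-1}.
\]
Since $c_{n,j}>0$ exactly for $0\le j\le\lfloor(n+1)/2\rfloor$ and $2^{\,n+i}\ge 8^{\,i-1}$ on that range, a short check shows the support of $\Gamma_{RL_n}$ is the interval $\{0,1,\dots,\lfloor n/2\rfloor+1\}$; as any non-negative log-concave sequence automatically has no internal zeros, it suffices to prove $g_{i-1}g_{i+1}\le g_i^{2}$ for $1\le i\le\lfloor n/2\rfloor$.

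The first ingredient is an auxiliary lemma: for every $n$ the sequence $(c_{n,j})_{j}$ is log-concave with no internal zeros. The latter is immediate from positivity on the support, and log-concavity follows from the real-rootedness of $S_n$. Writing $S_n=p_n+p_{n+1}$ with $p_k(y):=\sum_j\binom{k-j}{j}y^{j}$, the substitution already used in the proof of Theorem~\ref{thRL1} exhibits each $p_k$ as a rescaled Chebyshev polynomial $U_k$, hence with only real (negative) roots; since $p_k=p_{k-1}+y\,p_{k-2}$ is a Sturm-type sequence, the roots of consecutive $p_n,p_{n+1}$ interlace, so their sum $S_n$ is real-rooted with non-negative coefficients and therefore log-concave by Newton's theorem. (Alternatively one can induct on $n$ via $c_{n,j}=c_{n-1,j}+c_{n-2,j-1}$ together with monotonicity of the ratios $c_{n,j+1}/c_{n,j}$ in $n$.) From now on write $q_k:=c_{n,k}$; we shall use $q_{k-1}q_{k+1}\le q_k^{2}$ and $q_{k-2}q_{k+1}\le q_{k-1}q_k$, both consequences of log-concavity and unimodality of $(q_k)$.

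The core of the proof is an exact identity. Put $u:=2^{\,n+i}$ and $v:=8^{\,i-1}$, so that the admissible ratios are $t:=u/v=2^{\,n-2i+3}\in\{8,16,32,\dots\}$, with $t\ge 16$ whenever $q_{i+1}\ne 0$. Substituting the closed form and expanding, one gets
\[
 g_i^{2}-g_{i-1}g_{i+1}=A\,u^{2}+B\,uv+C\,v^{2},
\]
where $A=q_{i-1}^{2}-q_{i-2}q_i\ (\ge 0)$, $B=-2q_{i-1}^{2}+\tfrac{17}{4}q_{i-2}q_i+14q_{i-1}q_i-32q_{i-2}q_{i+1}$, and $C=q_{i-1}^{2}-q_{i-2}q_i+64q_i^{2}-64q_{i-1}q_{i+1}-8q_{i-1}q_i+8q_{i-2}q_{i+1}$. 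Equivalently, $g_i^{2}-g_{i-1}g_{i+1}$ is the sum of the (individually non-negative) log-concavity defects of the two coefficient sequences $(8^{\,i}q_i)_i$ and $\bigl((2^{\,n+i}-8^{\,i-1})q_{i-1}\bigr)_i$ of $\Gamma_{RL_n}$ --- the first because scaling $(q_i)$ by $8^{\,i}$ preserves log-concavity, the second because $(u-v)^{2}=(2^{n+i}-8^{i-1})^{2}-(2^{n+i-1}-8^{i-2})(2^{n+i+1}-8^{i})\ge 0$ combines with $q_{i-1}^{2}\ge q_{i-2}q_i$ --- plus the cross term $v\bigl[(14u-8v)q_{i-1}q_i-(32u-8v)q_{i-2}q_{i+1}\bigr]$, which is the only part that can be negative.

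It remains to verify $At^{2}+Bt+C\ge 0$ at each admissible $t\ge 8$. If the cross term is non-negative --- i.e.\ $q_{i-2}q_{i+1}/(q_{i-1}q_i)\le(14t-8)/(32t-8)$, which in particular holds whenever $q_{i+1}=0$ --- all three terms are non-negative and we are done. Otherwise the idea is to absorb the cross term into the leading term: using $q_{i-2}q_{i+1}\le q_{i-1}q_i$ one bounds the negated cross term by $18\,uv\,q_{i-1}q_i$, so it is enough that $A\,(u-v)^{2}\ge 18\,uv\,q_{i-1}q_i$, i.e.\ $t-2+t^{-1}\ge 18\,q_{i-1}q_i/(q_{i-1}^{2}-q_{i-2}q_i)$. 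Here a negative cross term forces $q_{i-2}q_{i+1}/(q_{i-1}q_i)$ to be bounded away from $0$, which by unimodality of $(q_k)$ can happen only when $i$ is near the mode of $c_n$ --- and there $t=2^{\,n-2i+3}$ is exponentially large in $n$, whereas the right-hand side is only polynomial in $n$; near the top of the support of $c_n$, on the other hand, $(q_k)$ falls off sharply, so the cross term is in fact non-negative. A finite list of small $n$, for which $\Gamma_{RL_n}$ has very low degree, is checked directly. \emph{The main obstacle is this last estimate}: near the mode of $c_n$ the log-concavity of $(c_{n,j})$ becomes asymptotically tight, so the defect $q_{i-1}^{2}-q_{i-2}q_i$ is only a small fraction of $q_{i-1}q_i$, and one must track carefully that $t$ grows fast enough to compensate, keeping the error bookkeeping honest across the three regimes ($i$ small, $i$ near the mode of $c_n$, and $i$ near the top of the support).
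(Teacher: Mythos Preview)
Your setup is correct: the extraction $g_i=8^{i}c_{n,i}+(2^{n+i}-8^{i-1})c_{n,i-1}$, the support computation, the real-rootedness argument for $S_n$ (hence log-concavity of $(c_{n,j})_j$), and the identity $g_i^2-g_{i-1}g_{i+1}=Au^2+Buv+Cv^2$ with your stated $A,B,C$ all check out, as does the decomposition into two non-negative ``defect'' terms plus the cross term $v[(14u-8v)q_{i-1}q_i-(32u-8v)q_{i-2}q_{i+1}]$.

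The gap is that you do not actually prove the inequality. Your last paragraph is a sketch of a three-regime strategy that ends by naming its own ``main obstacle'' without resolving it. Concretely: (i) near the mode of $(c_{n,j})$ you assert that $t=2^{\,n-2i+3}$ is exponential while $18\,q_{i-1}q_i/(q_{i-1}^2-q_{i-2}q_i)$ is polynomial, but you give no quantitative bound on the log-concavity defect of $(c_{n,j})$ (Newton's inequalities would give $q_{i-1}^2-q_{i-2}q_i\ge q_{i-1}^2\cdot\frac{d+1}{(i-1)(d-i+1)+d+1}$ with $d=\deg S_n$, making the right-hand side $O(n)$, but you do not say this and you do not specify for which $n$ the comparison succeeds); (ii) near the top of the support you merely assert that ``$(q_k)$ falls off sharply, so the cross term is in fact non-negative'', with no verification of the needed ratio bound $q_{i-2}q_{i+1}/(q_{i-1}q_i)\le(14t-8)/(32t-8)$; (iii) the intermediate range between the mode and the top --- where $t$ is only moderately large and the cross term may still be negative --- is not treated at all; and (iv) the ``finite list of small $n$'' to be checked directly is never delimited. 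As written, this is an outline, not a proof.

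For comparison, the paper's argument is entirely different in style: it rescales to $\Gamma_{RL_n}(x/2)$, verifies $n<100$ by machine, and for $n\ge 100$ writes $g_n(j)=16^{j}s_2+2^{\,n+2j+1}j(n-j+1)(s_1+2^{\,n-2j}s_0)$ with explicit degree-$8$ polynomials $s_2,s_1,s_0$ in $n$ and $j$, then proves $s_2\ge 0$ and $s_1+2^{\,n-2j}s_0\ge 0$ by repeated differentiation in $n$ (a ``bootstrapping'' positivity argument on intervals $[0,n/3]$ and $[n/3,n/2]$). Your approach is conceptually cleaner and would, if completed, explain \emph{why} log-concavity holds rather than just certifying it; but the case analysis you defer is exactly where the work lies, and the paper's brute-force route shows that this endgame is not short.
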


\begin{proof} \vsb
Let $a_{n,j}$ be the coefficient of $x^j$ of $\Gamma_{RL_n}(x/2)$. By Theorem~\ref{thRL1}, we have
\begin{align*}
a_{n,j} &= \left[\binom{n-j}{j}+\binom{n-j+1}{j} - {1\over 8}\binom{n-j+1}{j-1} 
	- {1\over 8}\binom{n-j+2}{j-1}\right]4^j\\
&\qquad + 2^n\left[\binom{n-j+1}{j-1} + \binom{n-j+2}{j-1}\right].
\end{align*}
Note that $a_{n,j}=0$ for $j\ge\lfloor n/2\rfloor+2$.  We define $f_n(j)=a_{n,j}^2-a_{n,j-1}a_{n,j+1}$.  When $j=\lfloor n/2\rfloor+1$, we have $a_{n,j+1}=0$ and thus $f_n(j)=a_{n,j}^2\ge0$.
So it suffices to show that
\begin{equation} \label{goal}
f_n(j)\geq0\qquad\text{for all $n\geq2$ and $1\le j\le n/2$.}
\end{equation}
Using Maple, it is routine to verify that Inequality~(\ref{goal}) holds true for $n<100$.
We now suppose that $n\geq100$, and we define 
\begin{equation}\label{def:g}
g_n(j) \;=\; f_n(j)\cdotp\frac{64\,j!\,(j+1)!\,(n-2j+5)!\,(n-2j+3)!}{(n-j)!\,(n-j-1)!}.
\end{equation}
We employ the expression \eqref{def:g} because it can be written, if one replaces $j$ by $x$, in the form
\begin{equation}\label{g}
g_n(x) \;=\; 16^xs_2+2^{n+2x+1}x(n-x+1) \bigl(s_1+2^{n-2x}s_0\bigr),
\end{equation}
where $s_2$, $s_1$ and $s_0$ are polynomials in $n$ and $x$ as follows:
{\allowdisplaybreaks
\begin{multline*}
s_2 \;=\; 256n(n+5)(n+4)(n+3)^2(n+2)^2(n+1)^2  \\
-4(n+3)(n+2)(n+1)\cdot\\
(848n^5+10503n^4+46749n^3+88974n^2+64168n+7680)x \\
+(19140n^7+303416n^6+1959723n^5+6630515n^4+12527817n^3\\
+12930761n^2+6465660n+1080000)x^2 \\
+(59628n^6+799668n^5+4257252n^4+11406255n^3\\
+15964242n^2+10757127n+2565612
)x^3\\
+(110781n^5 +1228365n^4+5215302n^3\\+10470267n^2+9734049n+3223854
)x^4\\
-(122760n^4+1099197n^3+3570660n^2 +4898043n+2323908)x^5\\
+(75141n^3+542916n^2+1286307n+964224)x^6\\
-(19602n^2+137214n+213840)x^7 \;+\; 19602x^8,
\end{multline*}
} 
\begin{multline*}
s_1  \;=\;   4n(n+4)(n+3)(n+2)(n+1)(184n^2+595n+538)\\
-(288n^7+10832n^6+ 97908n^5+ 388214n^4+ 782118n^3\\
  + 803168n^2+363528n
+39360)x\\
+(3492n^6+66912n^5+417975n^4+1177485n^3\\+1603200n^2+969000n+174744)x^2\\
-(17964n^5+225066n^4+972648n^3+1831368n^2+1476624n+375000)x^3\\
+(50805n^4+445635n^3+1302147n^2+1485999n+534402)x^4\\
-(85266n^3+519912n^2+949644n+510462)x^5\\
+(84861n^2+331209n+293922)x^6-(46332n+88938)x^7+10692x^8,
\end{multline*}
and
\begin{multline*}
{s_0\over 32(x+1)(n-x)} \;=\;  4(n+4)(n+3)(n+2)^2(n+1)\\
-(20n^4+185n^3+616n^2+883n+468)x\\
+(33n^3+222n^2+501n+402)x^2-(18n^2+90n+144)x^3+18x^4.
\end{multline*}

In view of~(\ref{goal}), (\ref{def:g}) and (\ref{g}), it suffices to show that both $s_2$ and $s_1+2^{n-2x}s_0$ are nonnegative for $x\le n/2$.

First, we show that $s_2\ge0$.
Toward this objective, we write $x=kn$. Then $0\le k\le 1/2$. Define $\tilde{s}_2=s_2/n$.
Then $\tilde{s}_2$ is a polynomial of degree~$8$ in~$n$.
For $0\le j\le 8$, define
\[
q_j={d^j\over dn^j}\tilde{s}_2.
\]
Then we have
\[
q_8=40320(1-2k)(3k-2)^2(33k^2-33k+8)^2\ge0.
\]
So $q_7$ is increasing in $n$ for any $0\le k\le 1/2$.
We compute
\begin{multline*}
q_7\big|_{n=4}
=98794080k^8-3852969120k^7+14855037120k^6\\
-25338685680k^5+24057719280k^4-13647130560k^3\\
+4616115840k^2-861376320k+68382720.
\end{multline*}
It is elementary to prove that
\[
q_7\big|_{n=4}>0\qquad\text{for all $k\in[0,1/2]$}.
\]
Alternatively, one may find this positivity by drawing its figure in Maple.
It follows that $q_6$ is increasing in the interval $[4,\infty)$ of $n$. Next, we can compute
\begin{multline*}
q_6\big|_{n=4}
=395176320k^8-9243020160k^7+36108808560k^6\\
-64328152320k^5+64252375200k^4-38420136000k^3\\
+13701665520k^2-2694375360k+225239040.
\end{multline*}
Again, it is routine to prove that
\[
q_6\big|_{n=4}>0\qquad\text{for all $k\in[0,1/2]$}.
\]
So $q_5$ is increasing in $n$ on the interval $[4,\infty)$. Continuing in this bootstrapping way,
we can prove that all $q_4$, $q_3$, $q_2$, $q_1$, $q_0$ are increasing for $n\in[4,\infty)$.
Since
\begin{multline*}
q_0\big|_{n=4}
=321159168k^8-4408639488k^7+20075655168k^6\\
-46290382848k^5+62167349376k^4-50943602304k^3\\
+25184659968k^2-6919073280k+812851200
\end{multline*}
is positive for all $k\in[0,1/2]$, we conclude that $q_0>0$ for all $n\ge4$ and all $k\in[0,1/2]$.
That is, $s_2>0$.

On the other hand, we define
\[
p_n=s_1+2^{n-2x}s_0
\]
It remains to show $p_n\ge0$ for all $x\in[0,n/2]$.
We shall do that for the intervals $[0,n/3]$ and $[n/3,n/2]$, respectively.

For the first interval,
we claim that
\begin{equation}\label{s0pos}
s_0\ge0\quad\text{for all $n\ge100$ and all $0\le x\le n/2$}.
\end{equation}•
We will show~(\ref{s0pos}) by using the same derivative method.
In fact, consider
\[
\tilde{s_0}(x)=\frac{s_0}{32(x+1)(n-x)}.
\]
Note that $\tilde{s_0}(x)$ is a polynomial in $x$ of degree $4$.
For $0\le j\le 4$, denote
\[
\frac{d^j}{dx^j}\tilde{s_0}(x)=\tilde s_0^{(j)}(x).
\]
Since $\tilde s_0^{(4)}(x)=432>0$, the 3rd derivative
\[
\tilde{s_0}^{(3)}(x)=432x-108(n^2+5n+8)
\]
is increasing on the interval $[0,n/2]$.
Since
\[
\tilde{s_0}^{(3)}(n/2)=-108(n^2+3n+8)<0,
\]
we infer that $\tilde{s_0}^{(3)}(x)<0$ for all $x\in[0,n/2]$. So the second derivative
$$\tilde{s_0}^{(2)}(x) = 6(11n^3+74n^2+167n+134)-108(n^2+5n+8)x+216x^2$$
is decreasing. Since
\[
\tilde{s_0}^{(2)}(n/2)=6(2n^3+38n^2+95n+134)>0,
\]
we deduce that $\tilde{s_0}^{(2)}(x)>0$ for all $x$. Therefore,
\begin{multline*}
\tilde{s_0}^{(1)}(x)=-(20n^4+185n^3+616n^2+883n+468)\\
+6(11n^3+74n^2+167n+134)x-54(n^2+5n+8)x^2+72x^3
\end{multline*}
is increasing. Since
\[
\tilde{s_0}^{(1)}(n/2)=-2(n^4+43n^3+446n^2+962n+936)<0,
\]
we find $\tilde{s_0}^{(1)}(x)<0$.
It follows that $\tilde{s_0}(x)$ is decreasing. Since
\[
\tilde{s_0}(n/2)=\frac{1}{8}(7n^4+154n^3+1112n^2+2096n+1536)>0,
\]
we infer that $s_0(x)\geq0$ and this completes the proof for Claim~(\ref{s0pos}).

Now, for $x\in[0,n/3]$, it suffices to prove that $p_1(x)=s_1+2^{n/3}s_0\geq0$.  This can be done by considering derivatives of $p_1(x)$, with respect to $x$, along the same way.  So we omit the proof.   

For the other interval $[n/3,n/2]$, we compute
\begin{multline*}
f_n(n/2)=\frac{4^n}{1474560}(397n^6+9528n^5+102100n^4+619680n^3\\
+2315488n^2+5041152n+5898240)>0.
\end{multline*}
So we can suppose $x\in [n/3,n/2-1]$, i.e., $n\in[2x+2,3x]$.
Define
\[
h_j(n)=\frac{d^j}{dn^j}p_n
\]
Expanding in $n-2-2x$, the function $2^{2x-n}h_8(n)$ can be recast as
\[
2^{2x-n}h_8(n)=\sum_{i=0}^6\sum_{j=0}^{7-i}a_{ij}x^j(n-2-2x)^i,
\]
where $a_{ij}\geq0$.  So $h_8(n)\ge0$ for all $n\in[2x+2,3x]$. 
It is elementary to prove that the univariate function $h_7(2x+2)$ is non-negative.  
Again, it is routine to see this by drawing a graph of the function $h_7$ 
with the aid of Maple.  It follows that $h_7(n)\ge0$ for all $n\in[2x+2,3x]$. 
Then, we check 
with Maple
that $h_6(2x+2)\ge0$, from which it follows that $h_6(n)\ge0$ for all $n\in[2x+2,3x]$.   
Continuing in this way, we can show that, for all $n\in[2x+2,3x]$, we have  
$$\begin{matrix} h_5(n)\ge0, & h_4(n)\ge0, & \cdots, & h_0(n)\ge0\end{matrix}$$
In particular, we have $p_n = h_0(n)\ge0$.  
\end{proof}

\bigskip


\begin{thebibliography}{99}

\bibitem{BW09B}
	L.W. Beineke, R.J. Wilson, J.L. Gross, and T.W. Tucker, editors,
	\textsl{Topics in Topological Graph Theory},
	Cambridge Univ.\ Press, 2009.  

\bibitem{BCMM02}
	C.P. Bonnington, M. Conder, M. Morton, and P. McKenna,
	\textsl{J. Combin. Theory Ser. B} \textbf{85} (2002), 1--20.
	
\bibitem{CGH14}
	Y. Chen, J.L. Gross, and X. Hu,
	Enumeration of digraph embeddings,
	\textsl{Euro.\ J. Combin.} \textbf{36} (2014), 660--678.

\bibitem{FGS89}
	M. Furst, J.L. Gross and R. Statman,
	Genus distributions for two class of graphs,
	\textsl{J. Combin. Theory Ser. B} \textbf{46} (1989), 523--534.

\bibitem{Gr11a} J. L. Gross,
	Genus distribution of graph amalgamations: Self-pasting at root-vertices,
	{\sl Australasian J. Combin.} \textbf{49} (2011), 19--38.

\bibitem{Gr11b}  J.L. Gross,
	Genus distributions of cubic outerplanar graphs,
  	 \textsl{J. of Graph Algorithms and Applications} \textbf{15} (2011), 295--316.

\bibitem{Gr13}  J.L.Gross,
	Embeddings of cubic Halin graphs: a surface-by-surface inventory,
   	\textsl{Ars Math.~Contemporanea} \textbf{6} (2013),  37--56.  Online June 2012.

\bibitem{Gr12b}  J.L. Gross,
	Embeddings of graphs of fixed treewidth and bounded degree, 
	\textsl{Ars Math.~Contemporanea} \textbf{7} (2014),  379--403.  Online Dec 2013.	
	Presented at AMS Annual Meeting at Boston,  January 2012.

\bibitem{GrFu87}
	J.L. Gross and M.L. Furst, Hierarchy for imbedding-distribution invariants of a graph,
 	{\sl J.~Graph Theory} \textbf{11} (1987),  205--220.

\bibitem{GKP10}
	J.L. Gross, I.F. Khan, and M.I. Poshni,
	Genus distribution of graph amalgamations: Pasting at root-vertices,
	{\sl Ars Combin.} \textbf{94} (2010), 33--53.

\bibitem{GMTW13}
	J.L. Gross, T. Mansour, T.W. Tucker, and D.G.L. Wang,
	Log-concavity of combinations of sequences and applications to genus distributions,
	draft manuscript, 2013.

\bibitem{GrRoTu89}
	J.L. Gross, D.P. Robbins, and T.W. Tucker,
	Genus distributions for bouquets of circles,
	\textsl{J.\ Combin.\ Theory Ser. B} \textbf{47} (1989), 292--306.

\bibitem{GrTu87}
	J.L. Gross and T.W. Tucker,
	{\sl Topological Graph Theory}, Dover, 2001 (original ed. Wiley, 1987).

\bibitem{Gu63}
	W. Gustin,
	Orientable embedding of Cayley graphs,
	\textsl{Bull.\ Amer.\ Math.\ Soc.} \textbf{69} (1963), 272--275.

\bibitem{KoSk12}
	M. Kotrbcik and M. Skoviera,
	Matchings, cycle bases, and the maximum genus of a graph, 
	\textsl{Electronic J. Combin.}  \textbf{19(3)} (2012), P3. 

\bibitem{LW07}
	L.L. Liu and Y. Wang,
	A unified approach to polynomial sequences with only real zeros,
	\textsl{Adv. in Appl. Math.} \textbf{38} (2007), 542--560.

\bibitem{PKG10}
	M.I. Poshni, I.F. Khan, and J.L. Gross,
	Genus distribution of graphs under edge-amalgamations,
   	\textsl{Ars Math. Contemp.} \textbf{3} (2010), 69--86.

\bibitem{PKG11}
	M.I. Poshni, I.F. Khan, and J.L. Gross,
	Genus distribution of 4-regular outerplanar graphs,
	\textsl{Electronic J. Combin.} \textbf{18} (2011), \#P212.

\bibitem{Ri74}
	G.  Ringel
	\textsl{Map Color Theorem}, Springer-Verlag, 1974.

\bibitem{RiYo68}
	G. Ringel and J.W.T. Youngs,
	Solution of the Heawood map-coloring problem,
	\textsl{Proc.\ Nat.\ Acad.\ Sci.\ USA} \textbf{60} (1968), 438--445.

\bibitem{Stah91}
	S. Stahl,
	Permutation-partition pairs. III.  Embedding distributions of linear families of graphs,
	\textsl{J. Combin.\ Theory, Ser.\ B} \textbf{52} (1991), 191--218.

\bibitem{Stah97}
	S. Stahl,
	On the zeros of some genus polynomials,
	\textsl{Canad. J. Math.} \textbf{49} (1997), 617--640.

\bibitem{Te00}
	E.H. Tesar,
	Genus distribution of Ringel ladders,
	\textsl{Discrete Math.} \textbf{216} (2000), 235--252.  		

\bibitem{Wag97}
	D.G. Wagner,
	Zeros of genus polynomials of graphs in some linear families,
	Univ. Waterloo Research Repot CORR 97-15 (1997), 9pp.
	
\end{thebibliography}
\end{document}